\documentclass[11pt]{article}

\usepackage{amsmath,amssymb,amsthm,mathtools}

\usepackage[a4paper,margin=1in]{geometry}

\usepackage{hyperref}

\newtheorem{theorem}{Theorem}
\newtheorem{lemma}[theorem]{Lemma}

\newtheorem{corollary}[theorem]{Corollary}

\theoremstyle{definition}

\title{Strong uniqueness for fractional stochastic differential equations  with sign drift}
\author{ Ercan S\"onmez\thanks{Department of Mathematics, Pohang University of Science and Technology, \texttt{soenmez@postech.ac.kr}} \and Yuliya Mishura\thanks{Department of Probability, Statistics and Actuarial Mathematics, Taras Shevchenko National University of Kyiv, \texttt{yuliyamishura@knu.ua}}} 
\date{\today}

\begin{document}
	
\maketitle

\begin{abstract}
	Establishing pathwise uniqueness for stochastic differential equations driven by fractional Brownian motion with Hurst parameter \(H>1/2\) remains challenging, particularly for discontinuous drifts beyond the regularity regimes of existing theories. A canonical example is the sign drift, for which uniqueness fails in the corresponding deterministic equation. In this paper, we complement recent strong existence results by establishing pathwise uniqueness for the equation with sign drift perturbed by fractional Brownian motion for every \(H\in(1/2,1)\), thus obtaining strong well-posedness. Our proof develops a new approach based on the infinite iteration of an inequality arising from a fixed-point relation, combined with tail probability estimates exploiting the Gaussian structure of the noise. The methods and results cover a broader class of Gaussian Volterra processes satisfying suitable local nondeterminism and positivity conditions.
\end{abstract}

\noindent\textbf{Keywords:} Pathwise uniqueness, strong well-posedness, stochastic differential equations, fractional Brownian motion, Gaussian Volterra processes, regularization by noise

\medskip
\noindent\textbf{MSC 2020:} 60H10; 60G15; 60G22

\section{Introduction}

We consider the stochastic differential equation
\[
X_t=x+\int_0^t\operatorname{sign}(X_s)\,ds+B_t^H,\qquad t\geq 0,
\]
where \(B^H\) is a fractional Brownian motion with Hurst parameter \(H\in(1/2,1)\). The sign function provides one of the most canonical examples illustrating the phenomenon of regularization by noise: for the corresponding deterministic equation, the uniqueness of the solution fails,  yet the addition of a suitable stochastic perturbation can restore uniqueness.

This phenomenon was first established for stochastic differential equations driven by standard Brownian motion in the seminal work of Zvonkin \cite{Zvonkin1974}. For fractional Brownian motion, Nualart and Ouknine \cite{NualartOuknine2002} established strong well-posedness for the equation with sign drift in the regime \(H\in(0,1/2)\).  In fact, their pioneering work treated a much broader class of drift coefficients and revealed a fundamental distinction between the two Hurst regimes. For \(H\le 1/2\) a linear growth condition on the drift was sufficient; the case \(H>1/2\) required Hölder continuity in the spatial variable of order \(\alpha>1-{1}/{(2H)}\). Thus, their results exhibit a transition in the regularity requirements at \(H=1/2\). These results were subsequently developed and refined, notably through the regularization-by-noise framework of Catellier and Gubinelli~\cite{CatellierGubinelli2016}. Since then, a substantial literature has emerged on stochastic differential equations with irregular drifts driven by fractional Brownian motion; see, for instance, \cite{AnzelettiRichardTanre2023, BanosNilssenProske2020, butkovsky0, butkovsky1, butkovsky2, GaleatiHarangMayorcas2023, le2020stochastic}. 

The framework introduced by Catellier and Gubinelli has been particularly influential in the subsequent development of regularization-by-noise theory. One of its central results established pathwise uniqueness for drifts belonging to suitable Hölder–Besov spaces \(\mathcal C^\alpha\), under the regularity condition
\[
\alpha>1-\frac{1}{2H}.
\]

For \(H>1/2\), the right-hand side is strictly positive, so neither this condition nor the regularity assumptions imposed in subsequent strong well-posedness results cover discontinuous drifts such as the sign function. Beyond this regularity regime, pathwise uniqueness remains poorly understood, particularly for fractional Brownian motion with \(H>1/2\). Establishing pathwise uniqueness and strong well-posedness for equations with irregular drifts remains a challenging problem in general.

Strong well-posedness for the equation with sign drift in the regime \(H>1/2\) has long remained an open question. This equation has received particular attention in the works of Mishura and Nualart \cite{mishuranualart} and Boufoussi and Ouknine \cite{boufoussi}; see also \cite{mishura2008stochastic}. Their approaches, which relied on a right-continuous convention for the sign function, established existence results but did not yield pathwise uniqueness for the repulsive sign drift, leaving the question of strong well-posedness unresolved.

In this paper, we establish pathwise uniqueness for the equation above for every \(H\in(1/2,1)\). Combined with recent strong existence results for this equation \cite{ButhenhoffSonmez2025}, this yields strong well-posedness throughout the regime \(H>1/2\).

Our proof introduces a new approach based on the infinite iteration of an inequality derived from a fixed-point relation, combined with tail probability estimates exploiting the Gaussian structure of the driving noise. This approach differs from existing regularization-by-noise techniques and may be of independent interest. Moreover, our methods and results extend to a broader class of Gaussian Volterra processes satisfying suitable local nondeterminism and positivity conditions.

The remainder of the paper is organized as follows. In Section \ref{sec:main}, we formulate our main results and explain the ideas behind the methods developed in this paper. Section \ref{sec:proof} is devoted to the proofs.

\section{Strong uniqueness of the solution}\label{sec:main}

We begin by introducing the mathematical framework. Let \(T\in(0,\infty)\) and consider a filtered probability space \((\Omega,(\mathcal{A}_t)_{t\in[0,T]},\mathbb{P})\). Let \(W=(W_t)_{t\in[0,T]}\) be a standard Brownian motion, and assume that \((\mathcal{A}_t)_{t\in[0,T]}\) is the usual augmentation of the natural filtration generated by \(W\).

Following \cite{ButhenhoffSonmez2025}, we consider stochastic differential equations driven by Gaussian Volterra processes of the form
\[
B_t^K=\int_0^t K(t,s)\,dW_s,\qquad t\in[0,T],
\]
where \(K\) is a square-integrable Volterra kernel. We assume that \(B^K\) admits continuous sample paths.

We impose the following local nondeterminism condition on the kernel: there exist constants \(H\in(0,1)\) and \(c>0\) such that
\[
\int_{t-\varepsilon}^t K(t,s)^2\,ds
\ge c\varepsilon^{2H},
\qquad 0<\varepsilon<t\le T.
\]

In addition, we assume the following positivity condition on the kernel:
\[
K(t,r)K(s,r)\ge0
\quad\text{for almost every }r\in(0,s),
\qquad 0<s\le t\le T.
\]
This condition is automatically satisfied whenever \(K\) is nonnegative. 

We emphasize that the parameter \(H\) in the local nondeterminism condition may take any value in \((0,1)\). In particular, both conditions are satisfied by the standard Volterra kernels of fractional Brownian motion. Thus, our approach also recovers the known pathwise uniqueness for fractional Brownian motion with \(H\le 1/2\) by a new method. Let us emphasize that the main novelty, however, lies in the regime \(H>1/2\), which is the primary focus and motivation of this work. Other examples include Riemann–Liouville processes for every \(H\in(0,1)\), as well as nonnegative mixtures of completely correlated fractional Brownian motions, see \cite{ButhenhoffSonmez2025}.

The following statement is the main result of this paper.

\begin{theorem}\label{thm:sign-pathwise-uniqueness}
	Let \(B^K=(B_t^K)_{t\in[0,T]}\) be a continuous Gaussian Volterra process whose kernel \(K\) satisfies the local nondeterminism and positivity conditions stated above, and let \(x\in\mathbb R\). Consider the stochastic differential equation 
    	\begin{equation}\label{eq:sign-equation}
    		X_t
    		=
    		x+B_t^K+\int_0^t \operatorname{sign}(X_s)\,ds,
    		\qquad t\in [0,T],
    	\end{equation}
    	where \(\operatorname{sign}(0)=0\).
	
	Then pathwise uniqueness holds for \eqref{eq:sign-equation}.
	More precisely, if \(X^1\) and \(X^2\) are two continuous
	solutions driven by the same Gaussian Volterra process and
	having the same initial value \(x\), then
	\[
	\mathbb P\left(
	X_t^1=X_t^2\text{ for all }t\in [0,T]
	\right)=1.
	\]
\end{theorem}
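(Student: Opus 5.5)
The plan is to exploit the monotone structure of the sign drift together with small-ball and tail estimates for \(B^K\). Let \(X^1,X^2\) be two solutions and set \(D_t=X^1_t-X^2_t=\int_0^t(\operatorname{sign}(X^1_s)-\operatorname{sign}(X^2_s))\,ds\). Then \(D\) is Lipschitz with \(|D'|\le 2\), and \(D'_s\) can be nonzero only when \(X^1_s\) and \(X^2_s\) do not have the same sign. In that case \(0\) lies between \(X^1_s\) and \(X^2_s\), so \(|X^i_s|\le |D_s|\). Since sign is nondecreasing, \(D'_s\,D_s\ge 0\); the sign drift is repulsive, so monotonicity alone gives no contraction. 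The task is therefore to show that the set of times at which the solutions straddle zero is too small to let \(D\) grow.

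The first step is a fixed-point inequality. Let \(\tau=\inf\{t:D_t\neq 0\}\). Both solutions agree on \([0,\tau]\), and \(X_\tau=X^1_\tau=X^2_\tau\) must equal \(0\); otherwise both signs agree near \(\tau\) and \(D\) stays \(0\). For small \(h\), set \(M_h=\sup_{[\tau,\tau+h]}|D|\). The observation above gives
\[
M_h\le 2\,\lambda\{s\in[\tau,\tau+h]: |X^1_s|\le M_h\}.
\]
On \([\tau,\tau+h]\) we have \(X^1_s=B^K_s-B^K_\tau+O(s-\tau)\). So \(M_h\le 2\,\lambda\{s\le h:|B^K_{\tau+s}-B^K_\tau|\le M_h+s\}\). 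Applying this inequality with \(M_h\) on the right replaced by the previous bound, and repeating infinitely often, yields a decreasing sequence of bounds \(m_{k+1}=2\lambda\{s\le h:|\Delta B_s|\le m_k+s\}\) with \(m_0=2h\). The aim is to show \(m_k\to 0\) almost surely. The linear term \(s\) is harmless: for \(H>1/2\) the increments of \(B^K\) over short times are not guaranteed to dominate \(s\), so I would instead treat \(x+\int\) as a Lipschitz perturbation and control the occupation measure of \(\{|\Delta B_s+g(s)|\le m\}\) uniformly over \(2\)-Lipschitz \(g\).

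The second step, which I expect to be the main obstacle, is an occupation-time estimate. I need, with high probability, \(\lambda\{s\le h:|\Delta B_s+g(s)|\le m\}\le \frac12 m\) for all \(m\) in a suitable range, uniformly over admissible \(g\). Local nondeterminism gives that, conditionally on \(\mathcal A_{u}\), the variable \(B^K_t\) has variance at least \(c(t-u)^{2H}\). Hence \(\mathbb P(|B^K_t-y|\le m\mid\mathcal A_u)\lesssim m(t-u)^{-H}\), and moments of the occupation time follow by chaining over ordered times, as in the usual local-time moment computations. This yields \(\mathbb E[\lambda\{\cdot\}^n]\lesssim (C m h^{1-H})^n\), with a constant factor \(h^{1-H}\) that is small for small \(h\). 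The contraction factor is thus \(Ch^{1-H}<1/2\), and the iteration \(m_{k+1}\le \tfrac12 m_k\) forces \(M_h=0\).

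The difficulty is that \(\tau\) is random and \(g\) depends on the solution. I would handle \(\tau\) by a discretization over dyadic grids combined with a union bound, using the Gaussian tails to sum the probabilities. I would handle \(g\) by adaptedness: conditioning on \(\mathcal A_u\) keeps the drift up to time \(u\) fixed, and the remaining increment is Lipschitz-small. The positivity condition \(K(t,r)K(s,r)\ge0\) enters through the conditional means. It gives nonnegative correlations between the increments, so that conditioning on an increment being near zero does not bias later ones away from the Gaussian bound. I would use it in a Slepian- or association-type comparison to justify the product bound over ordered times.

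Finally, the geometric decay must hold on events whose probabilities are summable along the iteration levels \(m_k=2^{-k}h\). A Borel–Cantelli argument then gives \(M_h=0\) almost surely, contradicting the definition of \(\tau\) unless \(\tau\ge T\). This proves \(\mathbb P(X^1=X^2 \text{ on }[0,T])=1\).
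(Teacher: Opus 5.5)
Your first step is sound and close in spirit to the paper's recursion $R_{n+1}(t)\le\int_0^t\mathbf 1_{\{|Y^{(n)}_s|\le R_n(s)\}}\,ds$. The gap is in the second step, on which everything rests.

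To make the iteration contract you need $\operatorname{Leb}\{s\le h:|\Delta B_s+g(s)|\le m\}\le c\,m$ with $c$ small. This bound must be \emph{linear in $m$ as $m\to0$}, where $g$ is the drift of the solution itself. Conditioning on $\mathcal A_u$ cannot give this. On $[u,t]$ the drift moves by up to $t-u$, in a way that depends on the fresh noise. The best you get is $\mathbb P(|B^K_t+g(t)|\le m\mid\mathcal A_u)\lesssim (m+(t-u))(t-u)^{-H}$, which is of order $m^{1-H}$, not $m$, even after optimizing over $u$. Already for a single time point, $\int_0^h(m+s)s^{-H}\,ds$ contains an $h^{2-H}$ term with no factor $m$.

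Uniformity over all $2$-Lipschitz $g$ is not available either. For $H>1/2$ such drifts are in general not Cameron--Martin shifts, so Girsanov is unavailable. A union bound over an $m$-net of Lipschitz paths has entropy of order $h/m$, while the per-path tail does not improve as $m\to0$. A Slepian or association comparison does not address this, because the obstruction is the nonlinear dependence of the drift on the noise, not the sign of correlations.

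The same non-improvement breaks your Borel--Cantelli step. The ratio $\operatorname{Leb}\{s\le h:|\Delta B_s|\le m\}/(2m)$ converges to a nondegenerate local time. Hence the bad-event probabilities along $m_k=2^{-k}h$ do not decay in $k$ and are not summable.

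There are also two smaller issues. The iteration with $m_k+s$ is bounded below by $2\operatorname{Leb}\{s\le h:|\Delta B_s|\le s\}$, which is positive with positive probability, so it cannot reach $0$. And the moments carry an extra factor $n!/\Gamma(n(1-H)+1)$ that your bound omits.

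The missing idea is never to estimate occupation times of a solution at all. The paper sandwiches every fixed point between the extremal iterates $L_{n+1}=\Phi(L_n)$ and $U_{n+1}=\Phi(U_n)$, started from $\mp t$. These are explicit adapted functionals of the driving path and, because $\operatorname{sign}$ is nondecreasing, they are \emph{monotone} in it.

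The positivity condition is used not for correlations but for the decomposition $B^K_u=\widetilde B_u+\sum_j Z_jh_j(u)$. Here the $Z_j$ are independent standard normals, and $h_j\ge0$, $h_j=0$ on $[0,s_{j-1}]$, $h_j(s_j)=\sigma_j$. Then $Y^{(n)}_{s_i}$ is a nondecreasing function of $Z_i$ with slope at least $\sigma_i$. This yields the bound $C_H^p\prod_i r_i(s_i-s_{i-1})^{-H}$, linear in each radius despite the nonlinear drift.

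Finally, the paper does not contract at a fixed scale after a random time $\tau$. It works globally from time $0$ and propagates $R_n(t)\le L_nt^{\alpha_n}$ with $\alpha_n\approx an$, gaining a factor $\alpha_n^{-a}t^a$ per step up to logarithms. The bad events $E_{n,j}$ use logarithmic thresholds and are summable in $(n,j)$, and $L_n^{1/n}\to0$ then forces $R_n(t)\to0$.
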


Combining our pathwise uniqueness result with the strong existence result established in~\cite{ButhenhoffSonmez2025}, we obtain the following strong well-posedness result.

\begin{corollary}
Let \(B^K=(B_t^K)_{t\in[0,T]}\) be a continuous Gaussian
Volterra process whose kernel \(K\) satisfies the local
nondeterminism and positivity conditions stated above.
Then, for every \(x\in\mathbb R\), strong well-posedness holds for equation \eqref{eq:sign-equation}.
\end{corollary}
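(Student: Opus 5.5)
Two continuous solutions $X^1,X^2$ with the same noise and initial value satisfy
\[
D_t:=X^1_t-X^2_t=\int_0^t\bigl(\operatorname{sign}(X^1_s)-\operatorname{sign}(X^2_s)\bigr)\,ds,
\]
so $D$ is Lipschitz with constant $2$. Moreover, $\operatorname{sign}$ is nondecreasing, so $\dot D_s$ has the same sign as $D_s$ whenever $D_s\neq0$. This monotonicity is what drives the plan. On any interval where $D>0$ we have $\dot D\ge 0$, and $\dot D>0$ can only happen while $X^1_s>0\ge X^2_s$, that is, while $0\in[X^2_s,X^1_s)$. Hence $D$ cannot return to $0$ once it leaves, and $|D|$ is nondecreasing. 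Let $\tau=\inf\{t:D_t\neq0\}$. On $\{\tau<T\}$, after $\tau$ the gap $|D|$ grows only while the origin lies between the two solutions. So on a short interval $[\tau,\tau+\varepsilon]$,
\[
|D_{\tau+\varepsilon}|\le 2\,\bigl|\{s\in[\tau,\tau+\varepsilon]:\ |X^1_s|\le |D_s|\}\bigr| .
\]
This is the fixed-point inequality: the increment of $|D|$ is bounded by twice the time spent by $X^1$ in a window around $0$ of width $|D|$, and that width is itself controlled by $|D|$.

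The second step turns occupation time into a small quantity using the Gaussian structure. I would write $X^1_s=Y_s+B^K_s-\text{(conditional mean)}$, conditioning on $\mathcal A_u$ for $u<s$. Then $B^K_s=\int_0^u K(s,r)\,dW_r+\int_u^s K(s,r)\,dW_r$, and by the local nondeterminism condition the second term is independent of $\mathcal A_u$ with variance at least $c(s-u)^{2H}$. This gives the conditional small-ball bound
\[
\mathbb P\bigl(|X^1_s|\le\delta \,\big|\,\mathcal A_u\bigr)\le C\delta (s-u)^{-H},
\]
since the drift part is $\mathcal A_u$-measurable up to a Lipschitz error of size $(s-u)$. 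I would use the positivity condition to control how the conditional means at different times are correlated, and so make the estimates uniform over stopping times such as $\tau$. This may need an approximation of $\tau$ by dyadic times.

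The third step is the infinite iteration. Set $\eta(\varepsilon)=\sup|D|$ on $[\tau,\tau+\varepsilon]$. I aim for a tail bound of the form $\mathbb P(\eta(\varepsilon)>a)\le \mathbb P(\eta(\varepsilon)>a/\kappa)\cdot q$ with $q<1$ and $\kappa>1$, or an inequality such as $\mathbb E[\eta\wedge1]\le \theta\,\mathbb E[\eta\wedge1]+o(1)$. These would come from the inequality above combined with the conditional bound, which yields an expected occupation time $\lesssim \delta\,\varepsilon^{1-H}$. Iterating infinitely many times, along the dyadic scales $a,a/\kappa,a/\kappa^2,\dots$, should force $\mathbb P(\eta(\varepsilon)>a)=0$, because $\eta$ is a.s. positive only if it survives all the iterations. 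That gives $\eta\equiv0$, contradicting the definition of $\tau$ on $\{\tau<T\}$, so $\mathbb P(\tau<T)=0$. Continuity then gives $X^1=X^2$ on $[0,T]$.

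The main obstacle is the iteration step when $H>1/2$. There the naive bound $\delta\varepsilon^{1-H}$ is not contractive at the scale $\delta\sim\varepsilon$, since the gap can be as large as $2\varepsilon$. I would therefore exploit self-consistency: the gap is at most $2\times$ the occupation time, and the occupation time of a $\delta$-window is typically $\delta^{1/H}$-small in the relevant regime. Repeating the substitution gives $|D|\lesssim |D|^{\gamma}$ with $\gamma=1/H\cdot(\dots)>1$ on a high-probability event. The delicate part is making the exceptional probabilities summable across iterations, using Gaussian tails for $\sup$ of $B^K$ increments.
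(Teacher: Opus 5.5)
There is a genuine gap. Strong well-posedness means existence of a strong (adapted) solution \emph{together with} pathwise uniqueness. Your proposal only addresses uniqueness and never mentions existence. In the paper the corollary is immediate: pathwise uniqueness is Theorem~\ref{thm:sign-pathwise-uniqueness}, which you could simply invoke, and strong existence is taken from \cite{ButhenhoffSonmez2025}. Within the paper's framework existence is also cheap. On the event of full probability where $R_n\to0$ on $[0,T]$, the limit $A=\lim_n L_n=\lim_n U_n$ satisfies $L_{n+1}=\Phi(L_n)\le\Phi(A)\le\Phi(U_n)=U_{n+1}$ by \eqref{eq:Phi-monotone}. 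Hence $\Phi(A)=A$, and $A$ is adapted since each $L_n$ is a non-anticipative functional of $B^K$. Your two-solution comparison gives no existence statement at all.

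Your self-contained uniqueness argument also has a gap. Step~1 is correct and is the two-solution analogue of \eqref{eq:R-recursion}. Step~2, however, fails exactly where $H>1/2$ is hard. Write $X^1_s=\xi_u+\int_u^sK(s,r)\,dW_r+\int_u^s\operatorname{sign}(X^1_r)\,dr$ with $\xi_u$ $\mathcal A_u$-measurable. The last term is not $\mathcal A_u$-measurable and is only bounded by $s-u$. So what you actually get is $\mathbb P(|X^1_s|\le\delta\mid\mathcal A_u)\le C(\delta+s-u)(s-u)^{-H}$, not $C\delta(s-u)^{-H}$. Optimising over $u$ yields only $\delta^{1-H}$. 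Even heuristically, the self-consistent iteration $\delta\mapsto\varepsilon\delta^{1-H}$ then stalls at the positive scale $\varepsilon^{1/H}$ instead of reaching $0$. The obstacle you name is therefore not the real one: a genuine bound $C\delta(s-u)^{-H}$ with a deterministic window would gain a factor $\varepsilon^{1-H}$ per step, and this is exactly what the paper exploits.

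The paper never proves a small-ball bound for an arbitrary solution. It traps every fixed point between the extremal iterates, $L_n\le A\le U_n$, whose midpoint $M_n^w$ is a \emph{monotone} functional of the driving path. Positivity of $K$ is used precisely to get $h_i\ge0$, so that $z\mapsto F_i(z)$ is increasing with slope at least $\sigma_i$. This gives $\sqrt{2/\pi}\,r_i/\sigma_i$ with no drift error, jointly at several times by successive conditioning. An arbitrary solution has no such monotonicity in the noise. For instance, with $x=0$ the path $t\mapsto t$ solves the equation driven by $w\equiv0$, while $t\mapsto-(1-\theta)t$, $\theta\in(0,1)$, solves it driven by the larger path $w_t=\theta t$. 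So this ingredient is unavailable in your setup, and positivity does not play the role you assign to it.

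Finally, your window $|D_s|$ is random and correlated with $X^1$, so fixed-$\delta$ expectation bounds cannot be plugged in directly. The paper handles this by induction on deterministic envelopes $R_n(t)\le L_nt^{\alpha_n}$, with the constants $L_n$ of \eqref{eq:K-recursion-final}, on a single good event. It uses $p$-th moment bounds for $J_{n,\alpha,L}$ whose factor $\alpha_n^{-a}$ forces $L_n^{1/n}\to0$. Your step~3 leaves both the improvement exponent and the summability of the exceptional probabilities unspecified.
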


Before we prove this result, let us explain the idea  behind  the methods used here.

\subsection{Idea behind the proof}
The main idea of the proof is to exploit the fact that  the drift part of any solution can be determined iteratively from the path of the Gaussian Volterra process. To see this, write a solution in the form
\[
X_t=x+B_t^K+A_t,
\]
where
\[
A_t:=\int_0^t \operatorname{sign}(X_s)\,ds.
\]
Substituting the representation of \(X\) into the definition of \(A\) gives the fixed-point relation
\[
A_t
=
\int_0^t
\operatorname{sign}(x+B_s^K+A_s)\,ds.
\]
Thus, if we define
\[
\Phi(A)_t
:=
\int_0^t
\operatorname{sign}(x+B_s^K+A_s)\,ds,
\]
then the possible solutions of the original equation correspond precisely to fixed points of the map \(\Phi\).

This formulation suggests an iterative procedure. Before using the equation at all, we only know from \(|\operatorname{sign}|\le1\) that
\[
-t\le A_t\le t.
\]
This is our zeroth-order information. We may now insert these two extreme possibilities into the right-hand side of the fixed-point equation. Since \(\operatorname{sign}\) is nondecreasing, this gives the improved bounds
\[
\Phi(L_0)_t
\le A_t\le
\Phi(U_0)_t,
\]
where
\[
L_0(t)=-t,\qquad U_0(t)=t.
\] 
The new bounds can in turn be inserted into the fixed-point equation, producing still sharper bounds. Repeating this procedure leads to the monotone sequences
\[
L_0(t)=-t,\qquad U_0(t)=t,
\]
and
\[
L_{n+1}=\Phi(L_n),
\qquad
U_{n+1}=\Phi(U_n).
\]
Every possible solution satisfies
\[
L_n(t)\le A_t\le U_n(t)
\qquad\text{for every }n.
\]

The uniqueness problem is therefore reduced to understanding whether the uncertainty interval
\[
[L_n(t),U_n(t)]
\]
collapses as $n\to\infty$. Denote
\[
M_n(t)=\frac{U_n(t)+L_n(t)}2,
\qquad
R_n(t)=\frac{U_n(t)-L_n(t)}2.
\]
The quantity \(R_n(t)\) measures precisely the uncertainty remaining after \(n\) iterations. Hence it is enough to prove
\[
R_n(t)\longrightarrow0\quad \text{a.s. for any}\quad t>0 \quad\text{as}\quad n\to\infty.
\]

The key observation behind the proof is that the uncertainty can
survive from one iteration to the next only when the midpoint of
the current uncertainty interval is sufficiently close to zero.
To see this, write
\[
L_n(t)=M_n(t)-R_n(t),
\qquad
U_n(t)=M_n(t)+R_n(t).
\]
Thus \(M_n(t)\) is the midpoint of the interval in which the
drift part \(A_t\) is known to lie after \(n\) iterations.

Applying the fixed-point map once more, the two extremal
arguments of the sign function at time \(s\) are
\[
x+B_s^H+M_n(s)-R_n(s)
\qquad\text{and}\qquad
x+B_s^H+M_n(s)+R_n(s).
\]
Introduce
\[
Y_s^{(n)}
:=
x+B_s^K+M_n(s).
\]
If
\[
Y_s^{(n)}>R_n(s),
\]
then both extremal arguments are positive, and hence both sign
functions equal \(+1\). Similarly, if
\[
Y_s^{(n)}<-R_n(s),
\]
then both arguments are negative and both sign functions equal
\(-1\). In either case, the lower and upper iterations receive
exactly the same contribution at time \(s\), so no uncertainty
survives at that time.

Consequently, the integrands defining the upper and lower iterations
coincide whenever
\[
    |Y_s^{(n)}|>R_n(s).
\]
Thus, their difference can be nonzero only on the set
\[
    \bigl\{s\in[0,t]:|Y_s^{(n)}|\le R_n(s)\bigr\}.
\]
It follows that
\begin{align*}
    R_{n+1}(t)
    &=
    \frac12\int_0^t
    \Bigl[
        \operatorname{sign}\bigl(Y_s^{(n)}+R_n(s)\bigr)
        -
        \operatorname{sign}\bigl(Y_s^{(n)}-R_n(s)\bigr)
    \Bigr]\,ds\\
    &\le
    \int_0^t
    \mathbf 1_{\{|Y_s^{(n)}|\le R_n(s)\}}\,ds.
\end{align*}


This inequality is recursive. The uncertainty remaining after step \(n+1\) is controlled by
the amount of time the midpoint process \(Y^{(n)}\) spends in
the ball of radius \(R_n\) around zero. Thus, once \(R_n\)
is small, uncertainty can survive only during visits to an
increasingly small neighbourhood of zero. The probabilistic part
of the proof quantifies how little time can be spent in these
small balls. This produces a further improvement of \(R_n\), and
iterating this improvement ultimately forces
\[
R_n(t)\longrightarrow0.
\]
Therefore the lower and upper bounds collapse onto the same path. Since every solution is
trapped between them, this yields pathwise uniqueness.

\section{Proof of Theorem \ref{thm:sign-pathwise-uniqueness}}\label{sec:proof}

We begin this section with some notation. 

Put
	\[
	a:=1-H\in(0,1).
	\]
	For a continuous function \(A\), define
	\[
	\Phi(A)_t
	:=
	\int_0^t
	\operatorname{sign}(x+B_s^K+A_s)\,ds.
	\]
	Then \(X\) solves \eqref{eq:sign-equation} if and only if
	\[
	X_t=x+B_t^K+A_t
	\]
	for some fixed point
	\begin{equation*}
		A=\Phi(A).
	\end{equation*}
	Every such fixed point satisfies
	\begin{equation}\label{eq:A-speed-bound}
		|A_t|\le t,\qquad t\ge0.
	\end{equation}
	We shall prove that \(\Phi\) has at most one fixed point. Moreover, for a deterministic continuous path \(w\), write
	\[
	\Phi_w(f)_t
	:=
	\int_0^t
	\operatorname{sign}(x+w_s+f_s)\,ds.
	\]
	Since the sign function is nondecreasing, the following implication holds:
	\begin{equation}\label{eq:Phi-monotone}
		w\le\widetilde w,\quad f\le\widetilde f
		\quad\Longrightarrow\quad
		\Phi_w(f)\le\Phi_{\widetilde w}(\widetilde f).
	\end{equation}
	For the random path \(w=B^K\), define
	\[
	L_0(t):=-t,
	\qquad
	U_0(t):=t,
	\]
	and recursively
	\begin{equation}\label{eq:extremal-iteration}
		L_{n+1}:=\Phi(L_n),
		\qquad
		U_{n+1}:=\Phi(U_n).
	\end{equation}
	Since every image under \(\Phi\) satisfies
	\[
	-t\le\Phi(f)_t\le t,
	\]
	we have
	\[
	L_0\le L_1,
	\qquad
	U_1\le U_0.
	\]
	Using \eqref{eq:Phi-monotone} inductively gives
	\begin{equation*}
		L_0\le L_1\le\cdots\le L_n
		\le U_n\le\cdots\le U_1\le U_0.
	\end{equation*}
	If \(A\) is any fixed point of \(\Phi\), then
	\eqref{eq:A-speed-bound} implies
	\[
	L_0\le A\le U_0.
	\]
	Applying \(\Phi\) repeatedly and using monotonicity yields
	\begin{equation}\label{eq:fixed-point-squeeze}
		L_n\le A\le U_n,
		\qquad n\ge0.
	\end{equation}
	Now define
	\begin{equation*}
		M_n(t):=\frac{U_n(t)+L_n(t)}{2},
		\qquad
		R_n(t):=\frac{U_n(t)-L_n(t)}{2},
	\end{equation*}
	and set
	\[
	Y_t^{(n)}
	:=
	x+B_t^K+M_n(t).
	\]
	The following is the first auxiliary result of this section.
	
	\begin{lemma}
		It holds
		\begin{equation}\label{eq:R-recursion}
			R_{n+1}(t)
			\le
			\int_0^t
			\mathbf 1_{\{
				|Y_s^{(n)}|\le R_n(s)
				\}}\,ds.
		\end{equation}
	\end{lemma}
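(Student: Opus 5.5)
The plan is to compute \(R_{n+1}(t)\) directly from the recursion \eqref{eq:extremal-iteration} and then bound the integrand pointwise in \(s\). By definition,
\[
R_{n+1}(t)=\frac12\bigl(U_{n+1}(t)-L_{n+1}(t)\bigr)=\frac12\int_0^t\Bigl[\operatorname{sign}\bigl(x+B_s^K+U_n(s)\bigr)-\operatorname{sign}\bigl(x+B_s^K+L_n(s)\bigr)\Bigr]\,ds .
\]
Substituting \(U_n=M_n+R_n\) and \(L_n=M_n-R_n\) rewrites the two arguments as \(Y_s^{(n)}+R_n(s)\) and \(Y_s^{(n)}-R_n(s)\). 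This gives exactly the displayed identity from the heuristic section.

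Next I would bound the integrand
\[
g(s):=\tfrac12\bigl[\operatorname{sign}(Y_s^{(n)}+R_n(s))-\operatorname{sign}(Y_s^{(n)}-R_n(s))\bigr]
\]
pointwise. Since \(L_n\le U_n\), we have \(R_n(s)\ge0\), so \(Y_s^{(n)}-R_n(s)\le Y_s^{(n)}+R_n(s)\). Monotonicity of the sign function then gives \(g(s)\ge0\), and \(|\operatorname{sign}|\le1\) gives \(g(s)\le1\). The remaining step is a case split on \(s\).
\begin{itemize}
\item If \(Y_s^{(n)}>R_n(s)\ge0\), both arguments are strictly positive, both signs equal \(1\), and \(g(s)=0\).
\item If \(Y_s^{(n)}<-R_n(s)\), both arguments are strictly negative and again \(g(s)=0\).
\end{itemize}
In the two strict cases the convention \(\operatorname{sign}(0)=0\) never enters, because neither argument can vanish there. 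In every case we therefore get \(0\le g(s)\le\mathbf 1_{\{|Y_s^{(n)}|\le R_n(s)\}}\). Integrating over \([0,t]\) yields \eqref{eq:R-recursion}.

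There is no real obstacle here. The only points to check are that \(R_n\ge0\), which follows from the monotone ordering established before the lemma, and measurability of the integrands. Measurability holds because \(B^K\), \(L_n\) and \(U_n\) are continuous in \(s\): each \(L_n\) and \(U_n\) is an integral of a bounded measurable function, hence Lipschitz. The argument is pathwise, for each fixed \(\omega\).
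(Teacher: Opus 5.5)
Your proposal is correct and matches the paper's proof: both write $R_{n+1}(t)$ as the integral of $\tfrac12\bigl[\operatorname{sign}(Y_s^{(n)}+R_n(s))-\operatorname{sign}(Y_s^{(n)}-R_n(s))\bigr]$ and bound this integrand pointwise by $\mathbf 1_{\{|Y_s^{(n)}|\le R_n(s)\}}$. Your observation that the convention $\operatorname{sign}(0)=0$ plays no role is accurate: on $\{|y|\le r\}$ the bound by $1$ holds for any convention with $|\operatorname{sign}|\le 1$.
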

	
	\begin{proof}
		By definition
			\[
		R_n(t)\ge0,\qquad R_0(t)=t,
		\]
		and every fixed point satisfies
		\begin{equation*}
			|A_t-M_n(t)|\le R_n(t).
		\end{equation*}
		By \eqref{eq:extremal-iteration},
		\begin{align*}
			R_{n+1}(t)
			&=
			\frac12
			\int_0^t
			\Big[
			\operatorname{sign}\bigl(
			Y_s^{(n)}+R_n(s)
			\bigr)
			-
			\operatorname{sign}\bigl(
			Y_s^{(n)}-R_n(s)
			\bigr)
			\Big]\,ds.
			\label{eq:R-exact}
		\end{align*}
		For every \(y\in\mathbb R\) and \(r\ge0\),
		\[
		0
		\le
		\frac12
		\left[
		\operatorname{sign}(y+r)
		-
		\operatorname{sign}(y-r)
		\right]
		\le
		\mathbf 1_{\{|y|\le r\}}.
		\]
		The inequality also holds when \(y=\pm r\), because
		\(\operatorname{sign}(0)=0\). Hence
		\begin{equation*}
			R_{n+1}(t)
			\le
			\int_0^t
			\mathbf 1_{\{
				|Y_s^{(n)}|\le R_n(s)
				\}}\,ds.
		\end{equation*}
	\end{proof}
	
	In the following, we work with the Volterra representation
    \[
    B_t^K=\int_0^t K(t,r)\,dW_r,
    \]
    where \(W\) is a standard Brownian motion and \(K\) satisfies the local nondeterminism and positivity conditions introduced above. The local nondeterminism condition provides the lower bound
    \[
    \int_{s}^{t}K(t,r)^2\,dr
    \ge c(t-s)^{2H},\qquad 0\le s<t\le T.
    \]
    The positivity condition ensures that
    \[
    \int_a^{u\wedge t}K(u,r)K(t,r)\,dr\ge0,
    \qquad 0\le a<t,\quad a\le u\le T.
    \]
    These two properties will be used in the following lemma.
	
	\begin{lemma}
		There exists a constant \(C_H<\infty\) such that
		for every \(n\ge0\), every integer \(p\ge1\), every
		\[
		0=s_0<s_1<\cdots<s_p,
		\]
		and every \(r_1,\ldots,r_p>0\),
		\begin{equation}\label{eq:multi-time-small-ball}
			\mathbb P\left(
			|Y_{s_i}^{(n)}|\le r_i
			\text{ for }i=1,\ldots,p
			\right)
			\le
			C_H^p
			\prod_{i=1}^p
			\frac{r_i}{(s_i-s_{i-1})^H}.
		\end{equation}
	\end{lemma}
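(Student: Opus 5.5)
The plan is to peel off the time points one at a time, from \(s_p\) backwards, by conditioning on \(\mathcal A_{s_{p-1}}\) and using the Gaussian increment of \(W\) on \((s_{p-1},s_p]\). The difficulty is that \(M_n(s_p)\) is not \(\mathcal A_{s_{p-1}}\)-measurable: it depends on the whole path of \(B^K\) up to time \(s_p\). I would handle this with the monotonicity \eqref{eq:Phi-monotone} together with the positivity condition on \(K\), which together show that \(M_n\) moves in the same direction as the fresh Gaussian variable.

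\textbf{Step 1 (structure of \(M_n\)).} Regard \(L_n,U_n,M_n\) as functionals of a deterministic path \(w\), defined with \(\Phi_w\) in place of \(\Phi\). Since \(L_0,U_0\) do not depend on \(w\), induction with \eqref{eq:Phi-monotone} shows that \(w\le\widetilde w\) implies \(L_n[w]\le L_n[\widetilde w]\) and \(U_n[w]\le U_n[\widetilde w]\). Hence \(M_n[w]\) is nondecreasing in \(w\). Moreover, \(M_n(s)[w]\) depends only on \(w|_{[0,s]}\). Because \(B^K\) is a Volterra process, \(Y^{(n)}_s\) is therefore \(\mathcal A_s\)-measurable.

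\textbf{Step 2 (one-step conditional bound).} Fix \(i\), write \(\Delta=s_i-s_{i-1}\), and let
\[
G=\int_{s_{i-1}}^{s_i}K(s_i,r)\,dW_r,\qquad \sigma^2=\operatorname{Var}(G)\ge c\Delta^{2H},
\]
where the lower bound is the local nondeterminism condition. Decompose the Brownian increments on \((s_{i-1},s_i]\) as
\[
dW_r=\frac{G}{\sigma^2}K(s_i,r)\,dr+dW'_r,
\]
where \(W'\) is the residual. By Gaussianity, \(W'\) is independent of \(G\), and both are independent of \(\mathcal A_{s_{i-1}}\).

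Now change \(G\) to \(G+\delta\) with \(\delta>0\), keeping \(W'\) and \(\mathcal A_{s_{i-1}}\) fixed. For \(u\le s_i\), the path \(B^K_u\) changes by
\[
\frac{\delta}{\sigma^2}\int_{s_{i-1}}^{u}K(u,r)K(s_i,r)\,dr\ \ge 0,
\]
by the positivity condition. In particular \(B^K_{s_i}\) increases by exactly \(\delta\). By Step 1, \(M_n(s_i)\) does not decrease under this change.

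Consequently, conditionally on \(\mathcal A_{s_{i-1}}\vee\sigma(W')\), we can write \(Y^{(n)}_{s_i}=g(G)\) with \(g(G_2)-g(G_1)\ge G_2-G_1\) for \(G_2\ge G_1\). The set \(\{G:|g(G)|\le r_i\}\) is then an interval of length at most \(2r_i\). Since \(G\) is centred Gaussian with variance \(\sigma^2\) and independent of the conditioning, its density is bounded by \((2\pi c)^{-1/2}\Delta^{-H}\). This gives
\[
\mathbb P\bigl(|Y^{(n)}_{s_i}|\le r_i\,\big|\,\mathcal A_{s_{i-1}}\vee\sigma(W')\bigr)\le \frac{2}{\sqrt{2\pi c}}\,\frac{r_i}{\Delta^H}.
\]

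\textbf{Step 3 (iteration).} The events \(\{|Y^{(n)}_{s_j}|\le r_j\}\) for \(j<p\) are \(\mathcal A_{s_{p-1}}\)-measurable by Step 1. Take the expectation of the product of indicators, condition on \(\mathcal A_{s_{p-1}}\vee\sigma(W')\), and apply Step 2 with \(i=p\). This bounds the probability by \(C_H\,r_p/(s_p-s_{p-1})^H\) times the probability for the first \(p-1\) time points. Induction on \(p\) then yields \eqref{eq:multi-time-small-ball} with \(C_H=2/\sqrt{2\pi c}\), uniformly in \(n\) and \(x\). For \(i=1\) we have \(s_0=0\), and the same argument applies with trivial \(\mathcal A_0\).

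\textbf{Main obstacle.} The crux is Step 2: justifying that the perturbation along the direction \(K(s_i,\cdot)\) raises the entire path \(B^K|_{[0,s_i]}\), so that the monotonicity of \(M_n\) applies. This is exactly what the positivity condition gives. A second point to check is that \(M_n(s_i)\) is a measurable functional of the path, so that conditioning on \(W'\) is legitimate. This should follow from the fact that each \(\Phi_w\) is given by an integral that depends measurably on \(w\).
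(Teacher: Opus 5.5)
Your proposal is correct and follows essentially the paper's argument. You perturb along $K(s_i,\cdot)\mathbf 1_{(s_{i-1},s_i]}$ and use positivity of $K$ together with monotonicity of $M_n$ to get slope at least one in $G$ (the paper's slope $\sigma_i$ in $Z_i=G/\sigma_i$); you then bound the preimage interval and the Gaussian density and peel off time points from $s_p$ backwards, reaching the same constant $\sqrt{2/(\pi c)}$. The only difference is cosmetic: the paper conditions on $\sigma(\widetilde B_u, Z_j : j\neq i)$ from a simultaneous orthogonal decomposition along all $p$ directions, whereas you condition on $\mathcal A_{s_{i-1}}\vee\sigma(W')$, which makes the measurability of the earlier events immediate from adaptedness.
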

	
	\begin{proof}
		
		For \(i=1,\ldots,p\), define
		\[
		\sigma_i^2
		:=
		\int_{s_{i-1}}^{s_i}
		K(s_i,r)^2\,dr
		\]
		and
		\[
		g_i(r)
		:=
		\frac{
			K(s_i,r)
			\mathbf 1_{(s_{i-1},s_i]}(r)
		}{\sigma_i}.
		\]
		The functions \(g_1,\ldots,g_p\) are orthonormal in
		\(L^2([0,s_p])\), since they have disjoint supports and unit
		\(L^2\)-norm. Hence
		\[
		Z_i:=\int_0^{s_p}g_i(r)\,dW_r,
		\qquad i=1,\ldots,p,
		\]
		are independent standard normal random variables.
		
		Decomposing the underlying isonormal Gaussian process into
		\(\operatorname{span}\{g_1,\ldots,g_p\}\) and its orthogonal
		complement gives the representation
		\begin{equation*}\label{eq:Gaussian-decomposition}
			B_u^K
			=
			\widetilde B_u
			+
			\sum_{i=1}^p Z_i h_i(u),
		\end{equation*}
		where
		\[
		h_i(u)
		:=
		\int_0^u K(u,r)g_i(r)\,dr.
		\]
		By positivity of \(K\),
		\begin{equation*}\label{eq:hi-positive}
			h_i(u)\ge0.
		\end{equation*}
		Furthermore, because \(g_i\) is supported on
		\((s_{i-1},s_i]\),
		\begin{equation*}\label{eq:hi-zero}
			h_i(u)=0,
			\qquad u\le s_{i-1}.
		\end{equation*}
		At \(u=s_i\),
		\begin{equation*}\label{eq:hi-sigma}
			h_i(s_i)
			=
			\frac1{\sigma_i}
			\int_{s_{i-1}}^{s_i}
			K(s_i,r)^2\,dr
			=
			\sigma_i.
		\end{equation*}
		
		If \(w\le\widetilde w\), then
		\eqref{eq:Phi-monotone} and induction imply
		\[
		L_n^w\le L_n^{\widetilde w},
		\qquad
		U_n^w\le U_n^{\widetilde w},
		\]
		and consequently
		\begin{equation*}\label{eq:M-monotone}
			M_n^w\le M_n^{\widetilde w},
		\end{equation*}
		where \(M_n^{w}\) denotes the midpoint of the lower and upper
		iterations constructed with driving path \(w\). For brevity, set
			\[
			A_i:=\bigl\{|Y_{s_i}^{(n)}|\le r_i\bigr\},
			\qquad i=1,\ldots,p.
			\]
			Define
				\begin{equation*}
				\mathcal G_i
				:=
				\sigma\left(
				\widetilde B_u:\,u\ge0;\,
				Z_j:\,j\neq i
				\right).
				\label{eq:def-Gi}
			\end{equation*}
			Since \(Z_i\) is independent of \(\mathcal G_i\), its conditional
			distribution given \(\mathcal G_i\) is again standard normal.
			
			The events \(A_1,\ldots,A_{i-1}\) are
			\(\mathcal G_i\)-measurable, see below for the proof. In particular, for \(i=p\), the tower
			property gives
			\begin{align}
				\mathbb P\left(\bigcap_{i=1}^p A_i\right)
				&=
				\mathbb E\left[
				\prod_{i=1}^p \mathbf 1_{A_i}
				\right]
				\notag\\
				&=
				\mathbb E\left[
				\mathbb E\left[
				\prod_{i=1}^p \mathbf 1_{A_i}
				\,\middle|\,
				\mathcal G_p
				\right]
				\right]
				\notag\\
				&=
				\mathbb E\left[
				\left(
				\prod_{i=1}^{p-1}\mathbf 1_{A_i}
				\right)
				\mathbb E\left[
				\mathbf 1_{A_p}
				\,\middle|\,
				\mathcal G_p
				\right]
				\right].
				\label{eq:first-conditioning-step}
			\end{align}
			We shall prove below that, for every \(i=1,\ldots,p\),
			\begin{equation}
				\mathbb P\left(
				A_i\,\middle|\,\mathcal G_i
				\right)
				=
				\mathbb E\left[
				\mathbf 1_{A_i}\,\middle|\,\mathcal G_i
				\right]
				\le
				\sqrt{\frac{2}{\pi}}\,
				\frac{r_i}{\sigma_i}
				\qquad\text{a.s.}
				\label{eq:conditional-bound}
			\end{equation}
			Assuming \eqref{eq:conditional-bound} for the moment,
			\eqref{eq:first-conditioning-step} yields
			\[
			\mathbb P\left(\bigcap_{i=1}^p A_i\right)
			\le
			\sqrt{\frac{2}{\pi}}\frac{r_p}{\sigma_p}
			\mathbb P\left(\bigcap_{i=1}^{p-1}A_i\right).
			\]
			Repeating the same argument successively with
			\(Z_{p-1},Z_{p-2},\ldots,Z_1\), we obtain
			\begin{equation}
				\mathbb P\left(\bigcap_{i=1}^p A_i\right)
				\le
				\left(\sqrt{\frac{2}{\pi}}\right)^p
				\prod_{i=1}^p\frac{r_i}{\sigma_i}.
				\label{eq:product-sigma}
			\end{equation}
			It therefore remains to prove the conditional estimate
			\eqref{eq:conditional-bound}. Fix \(i\in\{1,\ldots,p\}\). Recall the orthogonal decomposition
			\[
			B_u^K
			=
			\widetilde B_u
			+
			\sum_{j=1}^p Z_j h_j(u),
			\]
			where \(Z_1,\ldots,Z_p\) are independent standard normal random
			variables, the Gaussian process \(\widetilde B\) is independent of
			\((Z_1,\ldots,Z_p)\), and
			\[
			h_i(u)\ge0,
			\qquad
			h_i(u)=0 \quad\text{for }u\le s_{i-1},
			\qquad
			h_i(s_i)=\sigma_i.
			\]
	
			Conditionally on \(\mathcal G_i\), all the randomness in the driving
			path is therefore due to the single coordinate \(Z_i\).
			For \(z\in\mathbb R\), define the corresponding driving trajectory
			\[
			w^z(u)
			:=
			\widetilde B_u
			+
			\sum_{j\neq i} Z_jh_j(u)
			+
			zh_i(u),
			\]
			and set
			\begin{equation*}
				F_i(z)
				:=
				x+w^z(s_i)+M_n^{w^z}(s_i).
				\label{eq:def-Fi}
			\end{equation*}
			Then, conditionally
			on \(\mathcal G_i\),
			\[
			Y_{s_i}^{(n)}=F_i(Z_i).
			\]
			We stress that \(F_i\) need not be linear and
			\(Y_{s_i}^{(n)}\) need not be Gaussian. The only property of \(F_i\)
			that we use is the following deterministic estimate.
			
			Indeed, let \(z'>z\). Since \(h_i(u)\ge0\) for every \(u\),
			\[
			w^{z'}(u)-w^z(u)
			=
			(z'-z)h_i(u)
			\ge0,
			\qquad u\ge0.
			\]
			Hence \(w^z\le w^{z'}\) pointwise. By the monotonicity of the
			midpoint functional with respect to the driving path,
			\[
			M_n^{w^z}(s_i)
			\le
			M_n^{w^{z'}}(s_i).
			\]
			Using also \(h_i(s_i)=\sigma_i\), we obtain
			\begin{align}
				F_i(z')-F_i(z)
				&=
				w^{z'}(s_i)-w^z(s_i)
				+
				M_n^{w^{z'}}(s_i)-M_n^{w^z}(s_i)
				\notag\\
				&\ge
				(z'-z)h_i(s_i)
				=
				\sigma_i(z'-z).
				\label{eq:Fi-slope}
			\end{align}
			
			Consider now the set
			\[
			I_i
			:=
			\{z\in\mathbb R:|F_i(z)|\le r_i\}.
			\]
			If \(z<z'\) both belong to \(I_i\), then
			\[
			F_i(z),F_i(z')\in[-r_i,r_i],
			\]
			and hence
			\[
			F_i(z')-F_i(z)\le2r_i.
			\]
			Together with \eqref{eq:Fi-slope}, this implies
			\[
			z'-z\le\frac{2r_i}{\sigma_i}.
			\]
			Since \(F_i\) is increasing, \(I_i\) is an interval, possibly empty,
			and therefore
			\begin{equation}
				\operatorname{Leb}(I_i)
				\le
				\frac{2r_i}{\sigma_i},
				\label{eq:preimage-length}
			\end{equation}
			where $\operatorname{Leb}(I_i)$ denotes the Lebesgue measure of $I_i$.
            
			We may now estimate the conditional probability in
			\eqref{eq:conditional-bound}. Since, conditionally on
			\(\mathcal G_i\), \(Z_i\) has the standard normal density
			\[
			\varphi(z)
			=
			\frac{1}{\sqrt{2\pi}}e^{-z^2/2}
			\le
			\frac{1}{\sqrt{2\pi}},
			\]
			we have, almost surely,
			\begin{align*}
				\mathbb P\left(
				A_i\,\middle|\,\mathcal G_i
				\right)
				&=
				\mathbb P\left(
				|F_i(Z_i)|\le r_i
				\,\middle|\,
				\mathcal G_i
				\right)
				\notag\\
				&=
				\int_{\mathbb R}
				\mathbf 1_{\{|F_i(z)|\le r_i\}}
				\varphi(z)\,dz
				\notag\\
				&=
				\int_{I_i}\varphi(z)\,dz
				\notag\\
				&\le
				\frac{1}{\sqrt{2\pi}}
				\operatorname{Leb}(I_i)
				\notag\\
				&\le
				\sqrt{\frac{2}{\pi}}\,
				\frac{r_i}{\sigma_i},
				\label{eq:conditional-bound-proof}
			\end{align*}
			where the last inequality follows from
			\eqref{eq:preimage-length}. This proves
			\eqref{eq:conditional-bound}.
			
			It remains only to justify that $A_1,\ldots,A_{i-1}\in\mathcal G_i$, which is used in the
			iteration. If \(j<i\), then \(s_j\le s_{i-1}\), and therefore
			\[
			h_i(u)=0,
			\qquad u\le s_j.
			\]
			Consequently, changing \(Z_i\) leaves the driving path unchanged on
			\([0,s_j]\). Since \(M_n(s_j)\), and hence
			\(Y_{s_j}^{(n)}\), depends only on the driving path up to time
			\(s_j\), the event \(A_j\) does not depend on \(Z_i\). Thus
			\[
			A_1,\ldots,A_{i-1}\in\mathcal G_i,
			\]
			which justifies the successive conditioning from
			\(i=p\) down to \(i=1\).
			
			Combining \eqref{eq:product-sigma} with the kernel estimate
			\[
			\sigma_i^2
			\ge
			c_H(s_i-s_{i-1})^{2H},
			\]
			and hence
			\[
			\sigma_i
			\ge
			c_H(s_i-s_{i-1})^H
			\]
			after adjusting the constant, we finally obtain
			\[
			\mathbb P\left(
			\bigcap_{i=1}^p
			\{|Y_{s_i}^{(n)}|\le r_i\}
			\right)
			\le
			C_H^p
			\prod_{i=1}^p
			\frac{r_i}{(s_i-s_{i-1})^H},
			\]
			which is the desired estimate.
	\end{proof}

	For \(\alpha\ge1\), \(L>0\), and \(t\le T\), define
	\begin{equation*}\label{eq:J-definition}
		J_{n,\alpha,L}(t)
		:=
		\int_0^t
		\mathbf 1_{\{
			|Y_s^{(n)}|\le Ls^\alpha
			\}}\,ds.
	\end{equation*}
	
	We next prove the following tail probability estimate for  $J_{n,\alpha,L}(t)$.
	
	\begin{lemma}
		There exist constants \(c_H,C_H>0\), independent of \(n,\alpha,L,t\), such that, for every \(\lambda\ge1\),
		\begin{equation}\label{eq:J-tail}
			\mathbb P\left(
			J_{n,\alpha,L}(t)
			>
			C_HL\lambda\alpha^{-a}t^{\alpha+a}
			\right)
			\le
			C_H e^{-c_H\lambda}.
		\end{equation}
	\end{lemma}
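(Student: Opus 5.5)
We prove the tail bound by the moment method: bound all integer moments of \(J_{n,\alpha,L}(t)\) using the multi-time small ball estimate \eqref{eq:multi-time-small-ball}, then convert to an exponential tail via Chebyshev.

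\textbf{Step 1: expand the moments.} For an integer \(p\ge1\), symmetrizing over orderings of the time variables gives
\[
\mathbb E\bigl[J_{n,\alpha,L}(t)^p\bigr]
=
p!\int_{0<s_1<\cdots<s_p<t}
\mathbb P\bigl(|Y_{s_i}^{(n)}|\le Ls_i^\alpha,\ i=1,\ldots,p\bigr)\,ds_1\cdots ds_p .
\]

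\textbf{Step 2: apply \eqref{eq:multi-time-small-ball}.} With \(r_i=Ls_i^\alpha\) and \(s_0=0\), this yields
\[
\mathbb E\bigl[J^p\bigr]\le p!\,C^pL^p\int_{0<s_1<\cdots<s_p<t}\prod_{i=1}^p\frac{s_i^\alpha}{(s_i-s_{i-1})^H}\,ds .
\]
We evaluate the simplex integral by integrating iteratively from \(s_1\) up to \(s_p\), using the Beta identity
\[
\int_0^{u} v^{\beta}(u-v)^{-H}\,dv=B(\beta+1,1-H)\,u^{\beta+1-H}=B(\beta+1,a)\,u^{\beta+a}.
\]
After \(k\) integrations the accumulated power of \(s_{k+1}\) is \(k(\alpha+a)+\alpha\). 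Integrating out \(s_p\) on \([0,t]\) as the last step, the integral equals
\[
t^{p(\alpha+a)}\prod_{k=1}^{p}B\bigl((k-1)(\alpha+a)+\alpha+1,\,a\bigr),
\]
up to a final harmless factor from the integration over \([0,t]\).

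\textbf{Step 3: estimate the Beta factors.} Use \(B(x,a)=\Gamma(a)\Gamma(x)/\Gamma(x+a)\le C_a x^{-a}\) for \(x\ge1\). The \(k\)-th factor is then at most \(C_a\bigl(k(\alpha+a)\bigr)^{-a}\le C_a(k\alpha)^{-a}\). Hence
\[
\mathbb E\bigl[J^p\bigr]\le p!\,(C L)^p\,\alpha^{-ap}\,(p!)^{-a}\,t^{p(\alpha+a)}
\le (p!)^{1-a}\bigl(C L\alpha^{-a}t^{\alpha+a}\bigr)^p .
\]
Since \(1-a=H<1\) and \((p!)^{H}\le p^{p}\), we get
\[
\bigl\|J\bigr\|_p\le C' p\, L\alpha^{-a}t^{\alpha+a}\qquad\text{for every integer }p\ge1 .
\]
This linear-in-\(p\) growth of moments is exactly what gives exponential tails.

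\textbf{Step 4: conclude by Chebyshev.} Set \(D:=L\alpha^{-a}t^{\alpha+a}\). Chebyshev with moment \(p\) gives
\[
\mathbb P\bigl(J>C_H\lambda D\bigr)\le\Bigl(\frac{C'p}{C_H\lambda}\Bigr)^p .
\]
Choose \(p=\lfloor\lambda\rfloor\ge1\) and take \(C_H=eC'\). Then the right-hand side is at most \(e^{-p}\le e\,e^{-\lambda}\), which is \eqref{eq:J-tail}. Alternatively, one can sum the exponential moment \(\mathbb E\,e^{\theta J/D}\) as a series in \(p\) for small \(\theta\).

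\textbf{Main obstacle.} The delicate point is Step 2/3: we must keep track of the \(\alpha\)-dependence uniformly, so that the final factor is \(\alpha^{-a}\) per time point. In addition, the constants from \eqref{eq:multi-time-small-ball} and from the Beta bounds must be independent of \(n\), \(\alpha\) and \(L\). The bound \(B(x,a)\lesssim x^{-a}\), valid uniformly for \(x\ge1\) (Stirling or Gautschi's inequality), supplies both the \(\alpha^{-a}\) decay and the \((p!)^{-a}\) gain needed to beat the \(p!\) from symmetrization.
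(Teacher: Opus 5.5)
Your proof is correct and follows the paper's route: moment expansion over the simplex, the multi-time small-ball bound, evaluation of the simplex integral, and Markov with $p=\lfloor\lambda\rfloor$. The one real difference is that you integrate the simplex exactly by iterated Beta integrals, keeping every $s_i^\alpha$ and gaining an extra $(p!)^{-a}$ that you then discard; the paper instead bounds $\prod_i u_i^\alpha\le u_1^\alpha$ and uses the Dirichlet integral with a Gamma-ratio inequality. There is one bookkeeping slip: you dropped the factor $(s_1-s_0)^{-H}=s_1^{-H}$ and added a nonexistent $(t-s_p)^{-H}$, so the exact value is $\frac{t^{p(\alpha+a)}}{p(\alpha+a)}\prod_{k=1}^{p-1}B\bigl(k(\alpha+a),a\bigr)$. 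This still satisfies the same bound $C^p\alpha^{-ap}(p!)^{-a}t^{p(\alpha+a)}$, so nothing downstream changes.
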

	
	\begin{proof}
		
		Let \(p\ge1\) be an integer. By symmetry and
		\eqref{eq:multi-time-small-ball},
		\begin{align*}
			\mathbb E[J_{n,\alpha,L}(t)^p]
			&=
			p!
			\int_{0<s_1<\cdots<s_p<t}
			\mathbb P\left(
			|Y_{s_i}^{(n)}|
			\le Ls_i^\alpha,\
			i=1,\ldots,p
			\right)
			d\mathbf s
			\notag\\
			&\le
			p!(C_HL)^p
			\int_{0<s_1<\cdots<s_p<t}
			\prod_{i=1}^p
			s_i^\alpha
			(s_i-s_{i-1})^{-H}
			d\mathbf s,
			\label{eq:J-moment-start}
		\end{align*}
		where \(s_0=0\).
		
		With \(s_i=tu_i\),
		\begin{equation*}\label{eq:J-moment-scaled}
			\mathbb E[J_{n,\alpha,L}(t)^p]
			\le
			p!(C_HL)^p
			t^{p(\alpha+a)}
			I_{p,\alpha},
		\end{equation*}
		where
		\[
		I_{p,\alpha}
		:=
		\int_{0<u_1<\cdots<u_p<1}
		\prod_{i=1}^p
		u_i^\alpha
		(u_i-u_{i-1})^{-H}
		d\mathbf u,
		\qquad u_0=0.
		\]
		Since \(u_i\in(0,1)\),
		\[
		\prod_{i=1}^p u_i^\alpha\le u_1^\alpha.
		\]
		Hence
		\begin{align*}
			I_{p,\alpha}
			&\le
			\int_0^1
			u_1^{\alpha-H}
			\left[
			\int_{u_1<u_2<\cdots<u_p<1}
			\prod_{i=2}^p
			(u_i-u_{i-1})^{-H}
			du_2\cdots du_p
			\right]du_1.
		\end{align*}
		
		Here we use the standard Dirichlet (multivariate Beta) integral,
		which follows by repeated application of the Beta-function identity,
		\[
		\int_{\substack{x_1,\ldots,x_m>0\\ x_1+\cdots+x_m<1}}
		\prod_{j=1}^{m}x_j^{a_j-1}
		(1-x_1-\cdots-x_m)^{a_{m+1}-1}\,dx
		=
		\frac{\prod_{j=1}^{m+1}\Gamma(a_j)}
		{\Gamma\!\left(\sum_{j=1}^{m+1}a_j\right)},
		\qquad a_j>0.
		\]
		Indeed, with the increment variables
		\[
		x_1=u_1,\qquad x_i=u_i-u_{i-1}\quad (2\le i\le p),
		\]
		the corresponding parameters are
		\[
		\alpha+a,\quad
		\underbrace{a,\ldots,a}_{p-1\ \mathrm{times}},
		\quad 1,
		\]
		and hence
		\[
		I_{p,\alpha}
		\le
		\Gamma(a)^{p-1}
		\frac{\Gamma(\alpha+a)}
		{\Gamma(\alpha+pa+1)}.
		\]
		
		Next we use the standard Gamma-ratio inequality
		\begin{equation*}\label{eq:Gamma-ratio}
			\frac{\Gamma(x+A)}{\Gamma(x+B)}
			\le
			x^{-(B-A)},
			\qquad
			x>0,\quad A\ge0,\quad B-A\ge1.
		\end{equation*}
		With
		\[
		x=\alpha,\qquad
		A=a,\qquad
		B=pa+1,
		\]
		we have
		\[
		B-A=pa+1-a=pa+H\ge1,
		\]
		and therefore
		\begin{equation*}\label{eq:Gamma-applied}
			\frac{\Gamma(\alpha+a)}
			{\Gamma(\alpha+pa+1)}
			\le
			\alpha^{-pa-H}.
		\end{equation*}
		Taking \(p\)-th roots and using
		\((p!)^{1/p}\le p\), we obtain
		\begin{equation*}\label{eq:J-Lp-sharp}
			\|J_{n,\alpha,L}(t)\|_{L^p}
			\le
			C_HL\,
			p\,
			\alpha^{-a-H/p}
			t^{\alpha+a}.
		\end{equation*}
		Since \(\alpha\ge1\) by assumption,
		\begin{equation}\label{eq:J-Lp}
				\|J_{n,\alpha,L}(t)\|_{L^p}
				\le
				C_HL\,p\,\alpha^{-a}t^{\alpha+a}.
		\end{equation}
		
		Consequently, there exist constants \(c_H,C_H>0\) such that,
		for every \(\lambda\ge1\),
		\begin{equation*}
				\mathbb P\left(
				J_{n,\alpha,L}(t)
				>
				C_HL\lambda\alpha^{-a}t^{\alpha+a}
				\right)
				\le
				C_H e^{-c_H\lambda}.
		\end{equation*}
		Indeed, set
		\[
		S:=L\alpha^{-a}t^{\alpha+a}.
		\]
		The estimate \eqref{eq:J-Lp} then gives that, for every integer \(p\ge1\),
		\[
		\|J_{n,\alpha,L}(t)\|_{L_p}
		\le C_0 p S,
		\]
		where \(C_0>0\) depends only on \(H\). Equivalently,
		\[
		\mathbb E\left[J_{n,\alpha,L}(t)^p\right]
		\le (C_0pS)^p.
		\]
		
		Let \(C_1>0\), to be chosen below. By Markov's inequality, for every
		integer \(p\ge1\) and every \(\lambda\ge1\),
		\begin{align}
			\mathbb P\left(
			J_{n,\alpha,L}(t)>C_1\lambda S
			\right)
			&\le
			\frac{
				\mathbb E[J_{n,\alpha,L}(t)^p]
			}{
				(C_1\lambda S)^p
			}
			\notag\\
			&\le
			\left(
			\frac{C_0p}{C_1\lambda}
			\right)^p.
			\label{eq:markov-tail}
		\end{align}
		Notice in particular that \(L\), \(\alpha\), and \(t\) have cancelled
		from the right-hand side.
		
		We first consider sufficiently large \(\lambda\). Choose, for example,
		\[
		p=\lfloor\lambda\rfloor .
		\]
		If \(\lambda\ge2\), then
		\[
		\frac{\lambda}{2}\le p\le\lambda.
		\]
		Consequently, choosing \(C_1\ge 2C_0\), we obtain from
		\eqref{eq:markov-tail}
		\[
		\mathbb P\left(
		J_{n,\alpha,L}(t)>C_1\lambda S
		\right)
		\le
		\left(\frac12\right)^p
		\le
		\exp\left(
		-\frac{\log 2}{2}\lambda
		\right),
		\qquad \lambda\ge2.
		\]
		Thus, with
		\[
		c_H:=\frac{\log 2}{2},
		\]
		we have
		\[
		\mathbb P\left(
		J_{n,\alpha,L}(t)>C_1\lambda S
		\right)
		\le e^{-c_H\lambda},
		\qquad \lambda\ge2.
		\]
		
		It remains to consider the bounded range \(1\le\lambda<2\).
		Here we simply use the trivial estimate
		\[
		\mathbb P\left(
		J_{n,\alpha,L}(t)>C_1\lambda S
		\right)
		\le1.
		\]
		Since \(\lambda<2\),
		\[
		1
		\le
		e^{2c_H}e^{-c_H\lambda}.
		\]
		Hence, after setting
		\[
		C_H:=\max\{C_1,e^{2c_H}\},
		\]
		and increasing the constant in the threshold if necessary, we obtain
		for every \(\lambda\ge1\),
		\[
		\mathbb P\left(
		J_{n,\alpha,L}(t)
		>
		C_HL\lambda\alpha^{-a}t^{\alpha+a}
		\right)
		\le
		C_He^{-c_H\lambda}.
		\]
		The constants \(c_H,C_H>0\) depend only on \(H\) and are independent	of \(n,\alpha,L,t\), which proves \eqref{eq:J-tail}.
	\end{proof}
	
	Recall that equation \eqref{eq:J-tail} holds whenever $\alpha\geq 1$ and $L>0$. We now specify sequences \(\alpha_n\) and $L_n$ to which we will apply the latter equation.  First we define
		\begin{equation*}\label{eq:alpha-new}
		\alpha_n
		:=
		1+na-D_n,
		\qquad n\ge0,
	\end{equation*}
	where
	\[
	D_0:=0,
	\qquad
	D_n:=\sum_{k=1}^n\delta_k, \qquad \delta_n
	:=
	\frac{a}{(n+1)^2},
	\qquad n\ge1.
	\]
	Then
	\[
	\alpha_0=1
	\]
	and
	\begin{equation}\label{eq:alpha-recursion-new}
		\alpha_{n+1}
		=
		\alpha_n+a-\delta_{n+1}.
	\end{equation}	
	Since
	\[
	0<\delta_{n+1}<a,
	\]
	we have
	\[
	\alpha_{n+1}>\alpha_n.
	\]
	Consequently,
	\begin{equation*}\label{eq:alpha-at-least-one}
		\alpha_n\ge1\quad\text{for every }n\ge0.
	\end{equation*}
	Furthermore,
	\[
	D_\infty
	=
	a\sum_{k=1}^\infty\frac1{(k+1)^2}
	=
	a\left(\frac{\pi^2}{6}-1\right)
	<\infty,
	\]
	and
	\begin{equation}\label{eq:alpha-asymptotic}
		\alpha_n=an+O_H(1).
	\end{equation}
	Here \(O_H(1)\) denotes a quantity that is uniformly bounded in \(n\), with a bound that may depend on \(H\). In particular, there exist \(c_H>0\) and \(n_H<\infty\) such
	that
	\begin{equation*}
		\alpha_n\ge c_H n,
		\qquad n\ge n_H.
	\end{equation*}
	Next,  we shall define a sequence \((L_n)_{n\ge0}\), beginning with
	\[
	L_0:=1.
	\]
	For (given) $A>0$ we choose a deterministic constant
	\[
	C_*=C_*(H,T,A),
	\]
	independent of $n$, and recursively define
	\begin{equation}\label{eq:K-recursion-final}
		L_{n+1}
		:=
		C_*L_n
		\alpha_n^{-a}
		\log(n+2),
		\qquad n\ge0.
	\end{equation}
	Since \(L_0=1\), this defines the entire sequence \((L_n)_{n\ge0}\) deterministically.
	
	Observe that, with this choice of the sequences  $\alpha_n$ and $L_n$ the estimate \eqref{eq:J-tail} applies for every $n$, including \(n=0\). Later we will show that, roughly spoken, if
	\[
	R_n(t)\le L_n t^{\alpha_n},
	\]
	i.e. if
	\(R_n(t)\) is of order \(t^{\alpha_n}\), then \(R_{n+1}(t)\) is naturally of order \(t^{\alpha_n+a}\), where \(a=1-H\). Then we will iterate this inequality. Thus one expects to gain approximately \(a\) in the exponent at each
	iteration. 

	We now prove the following
	
	\begin{lemma}
		Let \(A>0\) (to be chosen later), and define
		\begin{equation*}\label{eq:lambda-nj}
			\lambda_{n,j}
			:=
			A\bigl[
			\log(n+2)+\log(j+2)
			\bigr], \qquad n,j \geq 0.
		\end{equation*}
		For each \(n\ge0\), set
		\[
		\gamma_n:=\alpha_n+a
		\]
		and
		\begin{equation*}\label{eq:time-mesh}
			q_n:=1+\frac1{\gamma_n},
			\qquad
			t_{n,j}:=Tq_n^{-j},
			\qquad j\ge0.
		\end{equation*}
		Define the events
		\begin{equation}\label{eq:bad-event}
			\begin{split}
				E_{n,j}
				:=
				\Bigl\{
				J_{n,\alpha_n,L_n}(t_{n,j})
				>
				C_HL_n
				\lambda_{n,j}
				\alpha_n^{-a}
				t_{n,j}^{\gamma_n}
				\Bigr\}.
			\end{split}
		\end{equation}
		Then for some positive and finite constant  $C_{H,T,A}$ it holds
		\begin{equation}\label{eq:uniform-J} 
			J_{n,\alpha_n,L_n}(t)
			\le
			C_{H,T,A}
			L_n
			\alpha_n^{-a}
			\log(n+2)
			t^{\alpha_{n+1}},
			\qquad 0<t\le T,
		\end{equation}
		provided none of the events \(E_{n,j}\), \(j\ge0\), defined in \eqref{eq:bad-event}, occurs. Here $\alpha_n$ and $L_n$ are the sequences chosen above.
	\end{lemma}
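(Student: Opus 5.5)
The argument is a deterministic interpolation along the geometric mesh \(t_{n,j}\), together with the monotonicity of \(t\mapsto J_{n,\alpha_n,L_n}(t)\). Fix \(n\) and suppose none of the events \(E_{n,j}\) occurs. Then for every \(j\ge0\),
\[
J_{n,\alpha_n,L_n}(t_{n,j})\le C_HL_n\lambda_{n,j}\alpha_n^{-a}t_{n,j}^{\gamma_n}.
\]
Given \(t\in(0,T]\), pick the unique \(j\ge0\) with \(t_{n,j+1}<t\le t_{n,j}\). This is possible since \(t_{n,j}\downarrow0\) and \(t_{n,0}=T\). Because the integrand is nonnegative, \(J\) is nondecreasing in \(t\), so \(J_{n,\alpha_n,L_n}(t)\le J_{n,\alpha_n,L_n}(t_{n,j})\).

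Next compare the mesh value with \(t\). We have \(t_{n,j}=q_nt_{n,j+1}<q_nt\), hence
\[
t_{n,j}^{\gamma_n}\le q_n^{\gamma_n}t^{\gamma_n}=(1+1/\gamma_n)^{\gamma_n}t^{\gamma_n}\le e\,t^{\gamma_n}.
\]
This is exactly why the mesh ratio was chosen as \(1+1/\gamma_n\): the loss is uniformly bounded in \(n\).

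Then use \(\gamma_n=\alpha_n+a=\alpha_{n+1}+\delta_{n+1}\) to write \(t^{\gamma_n}=t^{\alpha_{n+1}}t^{\delta_{n+1}}\). The remaining task is to absorb \(\lambda_{n,j}t^{\delta_{n+1}}\) into \(C\log(n+2)\). Write
\[
\lambda_{n,j}=A\log(n+2)+A\log(j+2).
\]
For the first term, \(t^{\delta_{n+1}}\le T^{\delta_{n+1}}\le \max(1,T)^a\).

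The main obstacle is the term \(A\log(j+2)\), which is unbounded in \(j\). It must be killed by the factor \(t^{\delta_{n+1}}\), since \(t\) is small when \(j\) is large. From \(t\le t_{n,j}=Tq_n^{-j}\) we get
\[
t^{\delta_{n+1}}\le T^{\delta_{n+1}}\exp\bigl(-j\,\delta_{n+1}\log q_n\bigr).
\]
Here \(\log q_n\ge 1/(2\gamma_n)\), since \(\gamma_n\ge1\). Set \(\kappa_n:=\delta_{n+1}/(2\gamma_n)\); by \eqref{eq:alpha-asymptotic}, \(\kappa_n\asymp n^{-3}\). We then need
\[
\sup_j \log(j+2)e^{-\kappa_n j}\lesssim \log(1/\kappa_n)+1.
\]
To see this, split at \(j\approx\kappa_n^{-1}\log(1/\kappa_n)\): below, \(\log(j+2)\lesssim\log(1/\kappa_n)\); above, the exponential dominates. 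Since \(\log(1/\kappa_n)\lesssim \log(n+2)\), the whole product \(\lambda_{n,j}t^{\delta_{n+1}}\) is bounded by \(C_{H,T}A\log(n+2)\).

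It is precisely here that the summable yet only polynomially decaying choice \(\delta_n=a/(n+1)^2\) matters, because it yields a \(\log n\) loss rather than a polynomial one. Collecting constants, \(C_{H,T,A}=e\,C_H\,C_{H,T}A\) gives \eqref{eq:uniform-J}.
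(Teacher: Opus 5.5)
Your proof is correct and follows the paper's argument essentially step for step. You use the split $\gamma_n=\alpha_{n+1}+\delta_{n+1}$, absorb $\log(j+2)$ through the decay $t_{n,j}^{\delta_{n+1}}\le T^{\delta_{n+1}}\exp\bigl(-c\,j/(n+1)^3\bigr)$ at the cost of a $\log(n+2)$ factor, and interpolate between mesh points using $(1+1/\gamma_n)^{\gamma_n}\le e$. The only difference is cosmetic: you interpolate first and then transfer the decay via $t\le t_{n,j}$, whereas the paper first bounds $J$ at the grid points and then interpolates.
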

	
	\begin{proof}
		By \eqref{eq:alpha-recursion-new},
		\begin{equation}\label{eq:gamma-relation}
			\gamma_n
			=
			\alpha_{n+1}+\delta_{n+1}.
		\end{equation}
		Moreover, by the definition of the events $E_{n,j}$ and the constants $\lambda_{n,j}$, applying \eqref{eq:J-tail}, we obtain
		\begin{equation}\label{eq:bad-probability}
			\mathbb P(E_{n,j})
			\le
			C_H
			(n+2)^{-c_HA}
			(j+2)^{-c_HA}.
		\end{equation}

	On \(E_{n,j}^c\), using
	\eqref{eq:gamma-relation},
	\begin{align}
		J_{n,\alpha_n,L_n}(t_{n,j})
		&\le
		C_HA L_n
		\alpha_n^{-a}
		[\log(n+2)+\log(j+2)]
		\notag\\
		&\qquad\times
		t_{n,j}^{\alpha_{n+1}}
		t_{n,j}^{\delta_{n+1}}.
		\label{eq:grid-bound-1}
	\end{align}
	Now
	\begin{equation}\label{eq:delta-factor}
		t_{n,j}^{\delta_{n+1}}
		=
		T^{\delta_{n+1}}
		q_n^{-j\delta_{n+1}}.
	\end{equation}
	
	Since
	\[
	\gamma_n\asymp_H n+1,
	\qquad
	\delta_{n+1}\asymp_H(n+1)^{-2},
	\]
	and
	\[
	\log q_n
	=
	\log\left(1+\frac1{\gamma_n}\right)
	\asymp_H(n+1)^{-1},
	\]
	there exists \(c_H>0\) such that
	\begin{equation}\label{eq:q-decay}
		q_n^{-j\delta_{n+1}}
		\le
		\exp\left(
		-c_H\frac{j}{(n+1)^3}
		\right).
	\end{equation}
	Here, the notation $f_n \asymp_H g_n$ means that there exist constants $c_1, c_2$, only depending on  $H$, such that
	$$ c_1 f_n \leq g_n \leq c_2 g_n$$ 
	for every $n$. Consequently,
	\begin{equation}\label{eq:j-sup}
		\begin{split}
			&
			\sup_{j\ge0}
			[\log(n+2)+\log(j+2)]
			q_n^{-j\delta_{n+1}}
			\\
			&\hspace{3cm}
			\le
			C_H\log(n+2).
		\end{split}
	\end{equation}
	Indeed,
	\[
	\sup_{x\ge0}
	\log(x+2)
	\exp\left(-c_H\frac{x}{(n+1)^3}\right)
	\le
	C_H\log(n+2).
	\]
	
	Moreover,
	\[
	\sup_{n\ge0}T^{\delta_{n+1}}
	\le
	\max\{1,T^{a/4}\}
	<\infty.
	\]
	Combining
	\eqref{eq:grid-bound-1}--\eqref{eq:j-sup}, we obtain
	\begin{equation*}
		J_{n,\alpha_n,L_n}(t_{n,j})
		\le
		C_{H,T,A}
		L_n
		\alpha_n^{-a}
		\log(n+2)
		t_{n,j}^{\alpha_{n+1}}
	\end{equation*}
	whenever \(E_{n,j}\) does not occur.
	
	Now let
	\[
	t_{n,j+1}<t\le t_{n,j}.
	\]
	Since \(J_{n,\alpha_n,L_n}\) is nondecreasing,
	\[
	J_{n,\alpha_n,L_n}(t)
	\le
	J_{n,\alpha_n,L_n}(t_{n,j}).
	\]
	Furthermore,
	\[
	t_{n,j}\le q_nt
	\]
	and
	\[
	q_n^{\alpha_{n+1}}
	=
	\left(1+\frac1{\gamma_n}\right)^{\alpha_{n+1}}
	\le e.
	\]
	Hence, after enlarging the deterministic constant,
	\begin{equation*}
			J_{n,\alpha_n,L_n}(t)
			\le
			C_{H,T,A}
			L_n
			\alpha_n^{-a}
			\log(n+2)
			t^{\alpha_{n+1}},
			\qquad 0<t\le T,
	\end{equation*}
	provided none of the events \(E_{n,j}\), \(j\ge0\), occurs, which proves \eqref{eq:uniform-J}.
	\end{proof}
	
	We now combine the (uniform) estimates obtained so far with the recursion for \(R_n\) to prove, simultaneously for all \(n\), the inductive bounds
	\[
	R_n(t)\le L_n t^{\alpha_n},
	\qquad t\in[0,T].
	\]
	
	\begin{lemma} \label{Rnrecursion}
		Let $\alpha_n$ be defined as in \eqref{eq:alpha-recursion-new} and let $L_n$ be defined as in \eqref{eq:K-recursion-final}, where the constant \[
		C_*=C_*(H,T,A)
		\] in the definition of $L_n$ is chosen large enough such that \eqref{eq:uniform-J} with this constant holds. Then for every $\varepsilon>0$ there exists $A>0$ large enough such that, with probability at least $1-\varepsilon$, it holds
			\[
		R_n(t)\le L_n t^{\alpha_n},
		\qquad t\in[0,T], \, \quad n \in \mathbb{N}.
		\]
	\end{lemma}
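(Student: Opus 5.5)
We argue by induction on \(n\), on the good event
\[
G:=\bigcap_{n\ge0}\bigcap_{j\ge0}E_{n,j}^c .
\]
By \eqref{eq:bad-probability} and a union bound,
\[
\mathbb P(G^c)\le C_H\sum_{n\ge0}(n+2)^{-c_HA}\sum_{j\ge0}(j+2)^{-c_HA}.
\]
This series converges as soon as \(c_HA>1\). Its value is decreasing in \(A\) and tends to \(0\) as \(A\to\infty\), since every term is at most \(2^{-c_HA}\cdot(\text{summable})\). Hence for given \(\varepsilon>0\) we may choose \(A\) so large that \(\mathbb P(G)\ge1-\varepsilon\). One point about the order of choices must be respected. \(C_*=C_*(H,T,A)\) is chosen after \(A\), as the constant \(C_{H,T,A}\) in \eqref{eq:uniform-J} (or any larger number). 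The events \(E_{n,j}\) involve \(L_n\), and hence \(C_*\). However, the bound \eqref{eq:bad-probability} comes from \eqref{eq:J-tail}, whose constants are uniform in \(L\) and \(\alpha\). So the probability estimate does not depend on \(C_*\), and there is no circularity.

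On \(G\) we prove \(R_n(t)\le L_nt^{\alpha_n}\) for all \(t\in[0,T]\) by induction on \(n\).

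\emph{Base case.} \(R_0(t)=t=L_0t^{\alpha_0}\), since \(L_0=1\) and \(\alpha_0=1\).

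\emph{Inductive step.} Suppose \(R_n(s)\le L_ns^{\alpha_n}\) for all \(s\in[0,T]\). Then pointwise
\[
\{|Y^{(n)}_s|\le R_n(s)\}\subseteq\{|Y^{(n)}_s|\le L_ns^{\alpha_n}\}.
\]
Inserting this into the recursion \eqref{eq:R-recursion} gives
\[
R_{n+1}(t)\le J_{n,\alpha_n,L_n}(t),\qquad t\in[0,T].
\]
Since no \(E_{n,j}\) occurs on \(G\), \eqref{eq:uniform-J} applies and yields
\[
R_{n+1}(t)\le C_{H,T,A}L_n\alpha_n^{-a}\log(n+2)\,t^{\alpha_{n+1}}\le L_{n+1}t^{\alpha_{n+1}}
\]
by the definition \eqref{eq:K-recursion-final} of \(L_{n+1}\), because \(C_*\ge C_{H,T,A}\). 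The case \(t=0\) is trivial.

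A subtle point is that the induction is pathwise, while \(J_{n,\alpha_n,L_n}\) is defined with deterministic thresholds \(L_ns^{\alpha_n}\). This is exactly why the random \(R_n\) is replaced by a deterministic envelope: the tail estimate \eqref{eq:J-tail} is then applied only to the deterministic-threshold functionals. Their law is controlled by the small-ball lemma \eqref{eq:multi-time-small-ball} uniformly in \(n\), because that lemma only uses monotonicity of \(M_n^w\) in \(w\). Consequently the single event \(G\) serves for all \(n\) simultaneously.

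The main obstacle is the bookkeeping of constants: showing that \(A\) can be fixed first and then \(C_*\) chosen, with \eqref{eq:bad-probability} unaffected. I expect no genuine difficulty there, since the constants in \eqref{eq:J-tail} are independent of \(L,\alpha,t,n\).
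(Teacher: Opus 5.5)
Your proposal is correct and follows the paper's proof essentially step for step. You take the good event $\bigcap_{n,j}E_{n,j}^c$, which has probability at least $1-\varepsilon$ for large $A$ by \eqref{eq:bad-probability} and a union bound, and then run a pathwise induction: \eqref{eq:R-recursion} gives $R_{n+1}\le J_{n,\alpha_n,L_n}$, and \eqref{eq:uniform-J} together with \eqref{eq:K-recursion-final} closes the step. You also state explicitly that $C_*$ is chosen after $A$ and that \eqref{eq:bad-probability} does not depend on $C_*$ because \eqref{eq:J-tail} is uniform in $L$; the paper leaves this implicit, and it is a useful clarification.
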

	
	\begin{proof}
		We give a proof using induction over $n$. Recall the definition of the events $E_{n,j}$ in \eqref{eq:bad-event}. In particular, note that all the events \(E_{n,j}\) are deterministic functions of
		\(H,T,A,n,j\) and the driving Gaussian Volterra process; their
		definition does not depend on whether any previous induction
		step has succeeded. By \eqref{eq:bad-probability},
	\[
	\sum_{n=0}^\infty\sum_{j=0}^\infty
	\mathbb P(E_{n,j})
	\le
	C_H
	\left(
	\sum_{n=0}^\infty(n+2)^{-c_HA}
	\right)
	\left(
	\sum_{j=0}^\infty(j+2)^{-c_HA}
	\right).
	\]
	The right-hand side tends to zero as \(A\to\infty\). Fix \(\varepsilon>0\).
	Choose \(A=A(\varepsilon,H)\) sufficiently large that
	\begin{equation*}\label{eq:failure-sum}
		\sum_{n=0}^\infty\sum_{j=0}^\infty
		\mathbb P(E_{n,j})
		<\varepsilon.
	\end{equation*}
	Define
	\begin{equation*}\label{eq:good-event}
		G_\varepsilon
		:=
		\bigcap_{n=0}^\infty
		\bigcap_{j=0}^\infty
		E_{n,j}^c.
	\end{equation*}
	By the union bound,
	\begin{equation*}\label{eq:good-event-probability}
		\mathbb P(G_\varepsilon)\ge1-\varepsilon.
	\end{equation*}
	
	We claim that on \(G_\varepsilon\),
	\begin{equation}\label{eq:induction-claim}
		R_n(t)
		\le
		L_nt^{\alpha_n},
		\qquad
		n\ge0,\quad 0\le t\le T.
	\end{equation}
	
	For \(n=0\),
	\[
	R_0(t)=t=L_0t^{\alpha_0}.
	\]
	Suppose \eqref{eq:induction-claim} holds at level \(n\). Then,
	by \eqref{eq:R-recursion},
	\begin{align*}
		R_{n+1}(t)
		&\le
		\int_0^t
		\mathbf 1_{\{
			|Y_s^{(n)}|\le R_n(s)
			\}}\,ds
		\notag\\
		&\le
		\int_0^t
		\mathbf 1_{\{
			|Y_s^{(n)}|\le L_ns^{\alpha_n}
			\}}\,ds
		\notag\\
		&=
		J_{n,\alpha_n,L_n}(t).
		\label{eq:induction-J}
	\end{align*}
	Since \(G_\varepsilon\subseteq E_{n,j}^c\) for every \(j\),
	\eqref{eq:uniform-J} and \eqref{eq:K-recursion-final} imply
	\[
	R_{n+1}(t)
	\le
	L_{n+1}t^{\alpha_{n+1}}.
	\]
	This proves \eqref{eq:induction-claim} for every \(n\).
\end{proof}

	We finally show that the bounds obtained in Lemma \ref{Rnrecursion} force the uncertainty radius \(R_n(t)\) to vanish as \(n\to\infty\),
	so that the lower and upper extremal iterations collapse onto the same limit, yielding pathwise uniqueness.
	
	\begin{proof}[Proof of Theorem \ref{thm:sign-pathwise-uniqueness}]
		From \eqref{eq:K-recursion-final},
	\begin{equation*}\label{eq:K-product}
		L_n
		=
		C_*^n
		\prod_{k=0}^{n-1}
		\alpha_k^{-a}\log(k+2).
	\end{equation*}
	By \eqref{eq:alpha-asymptotic}, there exist
	\(c_H,C_H>0\) such that, for all sufficiently large \(k\),
	\[
	c_H(k+1)\le\alpha_k\le C_H(k+1).
	\]
	Consequently,
	\[
	L_n
	\le
	C^n
	\frac{
		\displaystyle\prod_{k=0}^{n-1}\log(k+2)
	}{
		(n!)^a
	}
	\]
	for some deterministic \(C=C(H,T,A)<\infty\).
	Using
	\[
	\prod_{k=0}^{n-1}\log(k+2)
	\le
	[\log(n+1)]^n
	\]
	and Stirling's formula,
	\[
	(n!)^{1/n}\sim\frac ne,
	\]
	we obtain
	\begin{equation*}\label{eq:K-root}
		L_n^{1/n}
		\le
		C
		\frac{\log(n+1)}
		{(n!)^{a/n}}
		=
		O\left(
		\frac{\log n}{n^a}
		\right).
	\end{equation*}
	Since \(a=1-H>0\),
	\begin{equation}\label{eq:K-root-zero}
			L_n^{1/n}\longrightarrow0.
	\end{equation}
	
	Now fix \(t\in(0,T]\). Let $\varepsilon>0$. By Lemma \ref{Rnrecursion} we can choose $A>0$ large enough in the definition of $L_n$ such that with probability at least $1-\varepsilon$ it holds
	\[
	R_n(t)
	\le
	L_nt^{\alpha_n}
	=
	L_nt^{1+na-D_n}.
	\]
	Since
	\[
	0\le D_n\le D_\infty<\infty,
	\]
	there exists a finite deterministic constant \(C_t\) such that
	\[
	t^{1-D_n}\le C_t
	\qquad\text{for every }n.
	\]
	Therefore
	\begin{align*}
		R_n(t)
		&\leq
		t^{1-D_n}
		L_n(t^a)^n
		\notag\\
		&\le
		C_t
		\left[
		L_n^{1/n}t^a
		\right]^n.
		\label{eq:R-final-decay}
	\end{align*}
	By \eqref{eq:K-root-zero},
	\[
	L_n^{1/n}t^a\longrightarrow0,
	\]
	and hence
	\begin{equation}\label{eq:R-zero}
			R_n(t)\longrightarrow0
			\qquad\text{for every }t\in(0,T].
	\end{equation}
	
	Let \(A^1,A^2\) be two fixed points of \(\Phi\). By
	\eqref{eq:fixed-point-squeeze},
	\[
	L_n(t)\le A_t^i\le U_n(t),
	\qquad i=1,2.
	\]
	Thus
	\[
	|A_t^1-A_t^2|
	\le
	U_n(t)-L_n(t)
	=
	2R_n(t).
	\]
	By \eqref{eq:R-zero}, for every fixed \(t\in[0,T]\),
    \[
    \mathbb P(A_t^1=A_t^2)\ge 1-\varepsilon.
    \]
    Since \(\varepsilon>0\) was arbitrary, letting
    \(\varepsilon\downarrow0\) yields
    \[
    \mathbb P(A_t^1=A_t^2)=1
    \]
    for every fixed \(t\in[0,T]\).
    
    To obtain simultaneous equality in time, we first consider
    all rational \(t\in[0,T]\). Since the rationals are countable,
    \[
    \mathbb P\left(
    A_t^1=A_t^2
    \text{ for all }t\in\mathbb Q\cap[0,T]
    \right)=1.
    \]
    Finally, by continuity of \(A^1\) and \(A^2\),
    \[
    \mathbb P\left(
    A^1=A^2\text{ on }[0,T]
    \right)=1.
    \]
	Equivalently,
	\[
	\mathbb P\left(
	X_t^1=X_t^2
	\text{ for all }t\in[0,T]
	\right)=1.
	\]
	
	This proves pathwise uniqueness.
\end{proof}

\bigskip
\noindent\textbf{Declaration of AI use.}
ChatGPT (OpenAI) was used to assist with mathematical discussions,
drafting, and language editing. The authors take full responsibility
for the content of the manuscript.

\bibliographystyle{plain}
\bibliography{references}

\end{document}